\DeclareSymbolFont{txsymbols}{OMS}{txsy}{m}{n}
\DeclareSymbolFont{txlargesymbols}{OMX}{txex}{m}{n}
\let\amalg\relax
\DeclareMathSymbol{\amalg}{\mathbin}{txsymbols}{113}
\let\coprod\relax
\DeclareMathSymbol{\coprod}{\mathop}{txlargesymbols}{96}
\DeclareMathSymbol{\nabla}{\mathord}{txsymbols}{114}
\def\bfig{\vcenter\bgroup\xy}
\def\efig{\endxy\egroup}
\theoremstyle{plain}
\newtheorem{thm}{Theorem}
\newtheorem{proposition}[thm]{Proposition}
\newtheorem{corollary}[thm]{Corollary}
\newtheorem{lemma}[thm]{Lemma}
\theoremstyle{definition}
\newenvironment{definition}{\refstepcounter{thm}
{\medskip\par\noindent\bf Definition \arabic{section}.\arabic{thm}.
}}{\vskip 2ex\par}
\newenvironment{example*}{
  {\medskip\noindent\it Example.}}{\vskip 2ex\par}
\newenvironment{conjecture}{\refstepcounter{thm}
{\medskip\par\noindent\bf Conjecture \arabic{section}.\arabic{thm}.
}}{\vskip 2ex\par}
\renewenvironment{proof}{
{\medskip\par\noindent\sc Proof.\ }}{\vskip 2ex\par}
\newcommand{\cala}{\mathcal{A}}
\newcommand{\calo}{\mathcal{O}}
\newcommand{\comp}{\mathrel{\scriptstyle\circ}}
\newcommand{\C}{\mathbb{C}}
\newcommand{\cinf}{{C^{\infty}}}
\newcommand{\inv}{^{-1}}
\newcommand{\tr}{\operatorname{tr}}
\def \pf{\begin{proof}}
\def \epf{\end{proof}}
\def \enum{\begin{enumerate}}
\def \eenum{\end{enumerate}}
\renewcommand{\to}{\rightarrow}
\def \defi{\begin{definition}}
  \def \edefi{\end{definition}}
\def \prop{\begin{proposition}}
\def \eprop{\end{proposition}}
\def \lem{\begin{lemma}}
\def \elem{\end{lemma}}
\def \cor{\begin{corollary}}
\def \ecor{\end{corollary}}
\newcommand{\ex}{\begin{example*}}
\newcommand{\eex}{\end{example*}}
\newenvironment{exer*}
  {\small\begin{exercise}}
  {\end{exercise}}
\newcommand{\probref}[1]{\textbf{\ref{#1}} } 
\def \ex*{\begin{example*}}
\def \eex*{\end{example*}}
\newenvironment{remark*}{
{\medskip\noindent\it Remark.}}{\vskip 2ex\par}
\def \rem*{\begin{remark*}}
\def \erem*{\end{remark*}}
\newenvironment{claim*}{
{\medskip\noindent\it Claim.}}{\vskip 2ex\par}
\def \pf{\begin{proof}}
\def \epf{\end{proof}}
\def \enum{\begin{enumerate}}
\def \eenum{\end{enumerate}}
\numberwithin{equation}{section}
\numberwithin{figure}{section}
\numberwithin{thm}{section}
\def\C{\mathbb C}
\newcommand{\dbar}{\bar{\partial}}
\newcommand{\term}[1]{\textbf{\textit{#1}}}
\begin{document}
\title
{Lefschetz Fixed Point Theorems for Correspondences}

\author{Loring W. Tu}
\address{Department of Mathematics\\
Tufts University\\
Medford, MA 02155} 
\email{loring.tu@tufts.edu}
\urladdr{ltu.pages.tufts.edu}
\keywords{fixed point, Lefschetz fixed point theorem, correspondence,
  holomorphic correspondence, Shimura}
\subjclass[2000]{Primary: 58C30; Secondary: 32Hxx}
\date{}
        \begin{center}
\textit{Dedicated to Catriona Byrne\\on the occasion of her retirement
  from Springer}
\end{center}
\begin{abstract}
The classical Lefschetz fixed point theorem states that the number of fixed points, counted with multiplicity $\pm 1$, of a smooth map $f$ from a manifold $M$ to itself can be calculated as the alternating sum $\sum (-1)^k \textrm{ tr } f^*|_{H^k(M)}$ of the trace of the induced homomorphism in cohomology.
In 1964, at a conference in Woods Hole, Shimura conjectured a Lefschetz fixed point theorem for a holomorphic map, which Atiyah and Bott proved and generalized into a fixed point theorem for elliptic complexes.  However, in Shimura's recollection, he had conjectured more than the holomorphic Lefschetz fixed point theorem.  He said he had made a conjecture for a holomorphic correspondence, but he could not remember the statement.
This paper is an exploration of Shimura's forgotten conjecture, first
for a smooth correspondence, then for a
holomorphic correspondence in the form of two conjectures and finally in the form of an open problem involving an
extension to holomorphic vector bundles over two varieties and the calculation of the
trace of a Hecke correspondence.
\end{abstract}
\maketitle

\bigskip
\bigskip


\setcounter{page}{1} \setcounter{thm}{0}

The classical Lefschetz fixed point theorem states that the number of
fixed points, counted with multiplicity $\pm 1$, of a smooth map $f$
from a compact oriented manifold $M$ to itself can be calculated as
the alternating sum $\sum (-1)^k \tr f^*|_{H^k(M)}$ of the trace of
the induced homomorphism in cohomology.\footnote{Throughout this
  article $H^*(M)$ denotes de Rham cohomology and the fixed points are
  assumed to be nondegenerate.} This alternating sum is called the
\term{Lefschetz number} $L(f)$ of the map $f$.
As a corollary, if the Lefschetz number $L(f)$ is nonzero, then $f$
has at least one fixed point. 

In 1964, at the AMS Woods Hole Conference in Algebraic Geometry,
Shimura conjectured an analogue for a holomorphic map of the Lefschetz
fixed point theorem.
Shimura's conjecture got the people at the conference all excited,
and there was a workshop to prove it.  At the end of the conference,
there were two proofs---an algebraic proof by Verdier, Mumford,
Hartshorne, and others, along more or less classical lines from the
Grothendieck version of Serre duality, and an analytic proof by Atiyah
and Bott.
Grothendieck generalized the algebraic proof in \cite[Cor.~6.12,
p.~131]{grothendieck--illusie}
and Atiyah and Bott generalized the analytic proof to the Atiyah--Bott fixed point
theorem for an elliptic complex in \cite[Th.~1, p.~246]{atiyah--bott66} and
\cite[Th.~A, p~377]{atiyah--bott67}.

There was a bit of controversy about this, because afterwards,
Shimura's name disappeared from this theorem.  It is now called the
holomorphic Lefschetz fixed point theorem and the more general
version is the Atiyah--Bott fixed point theorem.  Shimura was quite
upset about this.
The principals in this story have all passed away, Atiyah and Shimura
in the last two years.
Fortunately, while they were still living, I was able to interview
Michael Atiyah, Raoul Bott, Goro Shimura, and John Tate about the
holomorphic Lefschetz fixed point theorem and in 2015 I published an
article \cite{tu15} in the hope of setting the history straight.

In Shimura's recollection, he had conjectured more than the
holomorphic Lefschetz fixed point theorem.  He said he had made a
conjecture for an algebraic correspondence, which for a complex
projective variety is the same as a holomorphic correspondence, but he could not remember
the statement nor did he keep any notes.
He believed that his conjecture for a holomorphic correspondence
should have number-theoretic consequences for a Hecke correspondence
and higher-dimensional automorphic forms.
This article is an exploration of Shimura's forgotten conjecture,
first for a smooth correspondence, then 
for a holomorphic correspondence in the form of two conjectures, and finally an open problem
involving an
extension to holomorphic vector bundles over two varieties and the calculation of the
trace of a Hecke correspondence.

The coincidence locus of two set maps $f$, $g\colon N \to M$ is the
subset of $N$ on which they agree.
A coincidence locus is sometimes the fixed-point set of a
correspondence and vice versa, but the two types of sets are not the same.
In Lefschetz's original paper \cite{lefschetz26} he obtained a
coincidence locus formula for two continuous maps of manifolds.  The fixed-point formula
for a smooth correspondence (Theorem~\ref{t:lefschetz}) in this article agrees with
Lefschetz's coincidence formula when the coincidence is a
correspondence.
Thus, Theorem~\ref{t:lefschetz} is essentially already in Lefschetz
\cite{lefschetz26}.
It is also a special case of \cite{dell'ambrogio} for the trivial
group action and of \cite[Theorem 4.7, p.~15]{goresky--macpherson93} for the
trivial sheaf.
Since Lefschetz's time, there have been many generalizations and
variants of his coincidence and fixed-point formulas
(\cite{kuga--sampson},
\cite{goresky--macpherson93},
\cite{goresky--macpherson03}, \cite{dell'ambrogio}, \cite{taelman16}).
I offer this article in the hope that a simple-minded proof of a simple-minded
statement in the smooth case may spur some interest in the holomorphic case.

At the end of the article, I include as historical documents some
emails concerning the conjecture from Shimura to Atiyah and me in
2013.
I would like to thank Jeffrey D.\ Carlson, Mark Goresky, Jacob Sturm,
and the anonymous referee for many helpful comments and suggestions.

\section{Correspondences}

\begin{definition}
Let $X$ be a topological space.  A \term{correspondence} on $X$ is a
subspace $\Gamma \subset X \times X$ such that the two
projections $\pi_i\colon \Gamma \subset X \to X$, $i=1,2$, are
covering maps of finite degree (Figure~\ref{f:correspondence}).
\end{definition}

\begin{figure}
\begin{center}
{\psset{unit=.8}
\begin{pspicture}(-.5,-.5)(3.5,3.5)
\psline(-.5,0)(3.5,0)
\psline(0,-.5)(0,3.5)
\psline[linewidth=1pt](0.5,0.5)(3.5,3.5)
\pscurve[linewidth=2pt](0.5,3.5)(3.5,2.5)(0.5,1.5)(3.5,0.5)
\uput{.2}[0](3.5,3.5){$\Delta$}
\uput{.2}[0](3.5,2.5){$\Gamma $}
\uput{.2}[270](2,0){$X$}
\uput{.2}[180](0,2){$X$}
\end{pspicture}
}
\caption{A correspondence $\Gamma$ on $X$}
\label{f:correspondence}
\end{center}
\end{figure}
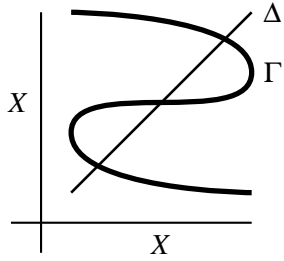

A correspondence $\Gamma$ on $X$ may be viewed as the graph of a
multivalued function from $X$ to $X$ whose value at $p \in X$ is the
set $\pi_2 \pi_1^{-1}( p )$.  By symmetry, it can also be
the multivalued function $\pi_1 \pi_2^{-1}$.

We have defined a correspondence in the continuous category.  Clearly,
it can also be defined in the categories of smooth manifolds and smooth
maps, complex manifolds and holomorphic maps, and algebraic varieties and
regular maps.

\section{Lefschetz Number of a Smooth Correspondence}

Suppose $\pi\colon N \to M$ is a $\cinf$ covering map of degree $r$.
Denote by $\cala^k(N)$ the vector space of smooth $k$-forms on $N$.
For $\omega \in \cala^k(N)$ and $p \in M$, define a $k$-covector
$(\pi_*\omega)_p$ at $p$ on $M$ by
\[
 (\pi_*\omega)_p(v_1, \ldots, v_k) = \sum_{q_i\in\pi\inv(p)} \omega_{q_i}(v_1^i,
   \ldots, v_k^i),
 \]
 where $v_1, \ldots, v_k \in T_pM$ and $v_1^i,\ldots, v_k^i$ are the
 unique tangent vectors in $T_{q_i}(N)$ such that $\pi_* v_j^i = v_j$.
 As $p$ varies over $M$, the $k$-covector $(\pi_*\omega)_p$ becomes a
 $k$-form $\pi_*\omega$ on $M$.
 This defines a pushforward map $\pi_*\colon \cala^k(N) \to
 \cala^k(M)$ of smooth $k$-forms on $N$.
 Since $\pi_*d = d\pi_*$, the pushforward induces a linear map
 $H^k(N)\to H^k(M)$ in cohomology, also denoted by $\pi_*$.
 
 A smooth correspondence induces a linear map on the cohomology of the
 manifold $M$ by
 \[
   \pi_{1*} \pi_2^*\colon H^*(M) \to H^*(M).
 \]

 \begin{definition}
   The \term{Lefschetz number} $L(\Gamma)$ of a smooth correspondence
   $\Gamma$ is defined to be the alternating sum of the traces of the linear map
   $ \pi_{1*}\pi_2^*$ on $H^k(M)$:
   \[
     L(\Gamma) = \sum_{k=0}^n (-1)^k \tr  \pi_{1*} \pi_2^*\colon H^k(M) \to
     H^k(M).
   \]
   \end{definition}

   \section{Fixed Points of a Smooth Correspondence}

   A \term{fixed point} of a smooth correspondence $\Gamma$ on a manifold $M$ is a
   point $p$ in $M$ such that $(p,p) \in \Gamma \cap \Delta$ in $M
   \times M$, where $\Delta$ is the diagonal.  The correspondence is
   called \term{transversal} if $\Gamma$ intersects $\Delta$
   transversally in $M \times M$.  In this case, the fixed points are
   said to be \term{nondegenerate}.
   Nondegenerate fixed points are isolated.

   When the manifold $M$ is oriented and the correspondence is
   transversal, we can assign a \term{multiplicity} or \term{index} to
   each fixed point $p$ in the usual way:  $\iota_{\Gamma}(p) = \pm 1$
   depending on whether the orientation on the tangent space
   $T_{(p,p)}(M \times M)$ agrees or disagrees with the orientation on
   the direct sum $T_{(p,p)}\Gamma \oplus T_{(p,p)} \Delta$.
   The intersection number $\#(\Gamma, \Delta)$ is then the sum
   $\sum \iota_{\Gamma}(p)$, where the sum runs over all fixed points
   $p$ of the correspondence $\Gamma$.
   When the manifold $M$ is compact, the number of nondegenerate fixed
   points is finite and the intersection number is defined.

     \section{The Trace of a Smooth Correspondence}

     We show how to calculate the trace of a correspondence in
     terms of differential forms.

     \begin{proposition} \label{p:trace}
       Let $\Gamma \subset M \times M$ be a smooth correspondence on
       a compact oriented
       smooth manifold $M$,  $\psi_1, \ldots, \psi_m$ closed
       $(n-k)$-forms on $M$ representing a basis for
       $H^{n-k}(M)$, and $\psi_1^*, \ldots, \psi_m^*$ closed
       $k$-forms representing the dual basis for $H^k(M)$.
       Then on $H^k(M)$,
       \[
         \tr \pi_{1*}\pi_2^* = \sum_{i=1}^m \int_{\Gamma}
         \pi_1^*\psi_i \wedge \pi_2^* \psi_i^*.
       \]
     \end{proposition}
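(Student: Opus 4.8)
The plan is to reduce the identity to the standard linear-algebra formula for a trace together with a projection formula for the pushforward $\pi_{1*}$. Since $M$ is compact and oriented, Poincar\'e duality supplies a nondegenerate pairing $H^{n-k}(M)\times H^k(M)\to\R$ given by $([\alpha],[\eta])\mapsto\int_M\alpha\wedge\eta$, and by hypothesis $\int_M\psi_i\wedge\psi_j^*=\delta_{ij}$, so $\{[\psi_i^*]\}$ is a basis of $H^k(M)$ whose dual basis under this pairing is $\{[\psi_i]\}\subset H^{n-k}(M)$. For any endomorphism $T$ of a finite-dimensional vector space expressed in a basis $\{v_j\}$ with dual basis $\{w_j\}$ (so $\langle w_i,v_j\rangle=\delta_{ij}$), one has $\tr T=\sum_i\langle w_i,Tv_i\rangle$. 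Applying this to $T=\pi_{1*}\pi_2^*$ on $H^k(M)$, and noting that $\pi_2^*\psi_i^*$ is a $k$-form on $\Gamma$ and $\pi_{1*}\pi_2^*\psi_i^*$ a $k$-form on $M$ representing $T[\psi_i^*]$, we get
\[
  \tr\pi_{1*}\pi_2^*=\sum_{i=1}^m\int_M\psi_i\wedge\pi_{1*}\pi_2^*\psi_i^*,
\]
the right-hand side being well defined since $\psi_i$ is closed and $M$ has no boundary, so the integral depends only on cohomology classes.

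The second step is a projection formula at the level of differential forms: for the covering map $\pi_1\colon\Gamma\to M$, any $\alpha\in\cala^*(M)$, and any $\beta\in\cala^*(\Gamma)$,
\[
  \pi_{1*}(\pi_1^*\alpha\wedge\beta)=\alpha\wedge\pi_{1*}\beta .
\]
This is a pointwise check: over $p\in M$ the fiber $\pi_1^{-1}(p)$ is finite and tangent vectors at $p$ lift uniquely to each sheet; on such lifted vectors the pulled-back form $\pi_1^*\alpha$ returns the value of $\alpha$ at $p$, independent of the sheet, so it factors out of the fiberwise sum over $\pi_1^{-1}(p)$ defining the pushforward at $p$. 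Taking $\alpha=\psi_i$ and $\beta=\pi_2^*\psi_i^*$ then gives $\psi_i\wedge\pi_{1*}\pi_2^*\psi_i^*=\pi_{1*}(\pi_1^*\psi_i\wedge\pi_2^*\psi_i^*)$.

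The third step records that integration is compatible with pushforward along a finite covering. Give $\Gamma$ the orientation for which $\pi_1$ is a local orientation-preserving diffeomorphism; note that $\Gamma$ is a closed oriented $n$-manifold, being a finite covering space of the compact oriented manifold $M$. Then $\int_M\pi_{1*}\eta=\int_\Gamma\eta$ for every $n$-form $\eta$ on $\Gamma$, by a partition-of-unity argument reducing to charts on which $\pi_1$ is a diffeomorphism onto its image. Putting the three steps together,
\[
  \tr\pi_{1*}\pi_2^*=\sum_{i=1}^m\int_M\pi_{1*}\!\left(\pi_1^*\psi_i\wedge\pi_2^*\psi_i^*\right)=\sum_{i=1}^m\int_\Gamma\pi_1^*\psi_i\wedge\pi_2^*\psi_i^*,
\]
which is the asserted formula.

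I expect no substantial difficulty here; the points requiring care are bookkeeping ones — pinning down the sign and ordering convention in the Poincar\'e pairing so that ``dual basis'' corresponds to $\int_M\psi_i\wedge\psi_j^*=\delta_{ij}$, and tracking the signs in the projection formula coming from $\deg\psi_i=n-k$ and $\deg\psi_i^*=k$ — and I would regard verifying the projection formula for $\pi_{1*}$ as the crux of the argument.
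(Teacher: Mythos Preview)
Your proof is correct and follows essentially the same approach as the paper: both express the matrix entries of $\pi_{1*}\pi_2^*$ via the Poincar\'e pairing as $\int_M \psi_i\wedge\pi_{1*}\pi_2^*\psi_j^*$ and then convert this to an integral over $\Gamma$ using covering-map identities for $\pi_1$. The only cosmetic difference is that where you invoke the projection formula $\pi_{1*}(\pi_1^*\alpha\wedge\beta)=\alpha\wedge\pi_{1*}\beta$ together with $\int_M\pi_{1*}\eta=\int_\Gamma\eta$, the paper instead introduces the covering degree $r$ twice (via $\int_M\tau=\tfrac{1}{r}\int_\Gamma\pi_1^*\tau$ and $\omega\wedge\pi_1^*\pi_{1*}\tau=r\,\omega\wedge\tau$) and lets the factors cancel; your route is arguably cleaner.
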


     \begin{proof}
       Let $[a_j^i]$ be the matrix of the linear operator
       $\pi_{1*}\pi_2^*$ on $H^k(M)$:
       \[
         \pi_{1*}\pi_2^*(\psi_j^*) = \sum a_j^i \psi_i^*.
       \]
       Then
       \begin{alignat*}{2}
         a_j^i &= \int_M \psi_i \wedge \pi_{1*}\pi_2^*\psi_j^*\\
               &= \frac{1}{r} \int_{\Gamma} \pi_1^* \psi_i \wedge
               \pi_1^*\pi_{1*}\pi_2^*\psi_j^*&\quad&\left(\text{because }
               \int_M \tau= \frac{1}{r}\int_{\Gamma} \pi_1^*\tau\right)\\
                 &= \int_{\Gamma} \pi_1^*\psi_i \wedge \pi_2^*
                 \psi_j^*&\quad&\left(\text{because } \omega \wedge
                 \pi_1^*\pi_{1*} \tau = r \omega \wedge
                 \tau\right).
               \end{alignat*}
               Therefore,
               \[
                 \tr \pi_{1*}\pi_2^* = \sum_i a_i^i = \sum_i \int_{\Gamma} \pi_1^*\psi_i \wedge \pi_2^*
                 \psi_i^*. \tag*{\qed}
               \]    
\end{proof}

\section{The Lefschetz Fixed Point Theorem for a Smooth
  Correspondence}

  \begin{thm}[Lefschetz fixed point theorem for a smooth
     correspondence] \label{t:lefschetz}
     Suppose $\Gamma$ is a transversal smooth correspondence on a compact, oriented
     smooth $n$-manifold $M$.  Then the Lefschetz number of $\,\Gamma$ is
     \[
       L(f) = \sum_{\text{\rm fixed points}\ p} \iota_{\Gamma}(p).
     \]
   \end{thm}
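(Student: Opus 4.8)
The plan is to reduce the theorem to Proposition~\ref{p:trace} together with two classical ingredients from intersection theory: the description of the Poincar\'e dual of the diagonal coming from the K\"unneth theorem, and the identification of the integral over a compact oriented manifold of a product of Poincar\'e duals with a transversal intersection number. Throughout I write $L(\Gamma)$ for what the statement calls $L(f)$.

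First I would fix the geometry and rewrite the Lefschetz number. Since $\pi_1\colon\Gamma\to M$ is a covering map of finite degree and $M$ is compact, $\Gamma$ is a compact smooth $n$-dimensional submanifold of the $2n$-manifold $M\times M$; I orient it so that $\pi_1$ is orientation-preserving, the orientation already implicit in the definition of $\pi_{1*}$. Let $j\colon\Gamma\hookrightarrow M\times M$ be the inclusion and $\pr_i\colon M\times M\to M$ the factor projections, so $\pi_i=\pr_i\circ j$. As $\Gamma$ and the diagonal $\Delta$ are compact oriented submanifolds of the compact oriented manifold $M\times M$, each has a Poincar\'e dual represented by a closed $n$-form $\eta_\Gamma,\eta_\Delta\in\cala^n(M\times M)$, characterized by $\int_S j_S^*\omega=\int_{M\times M}\omega\wedge\eta_S$ for all closed $n$-forms $\omega$, where $S\in\{\Gamma,\Delta\}$. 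For each $k$ choose closed $(n-k)$-forms $\psi^{(k)}_1,\dots,\psi^{(k)}_{m_k}$ representing a basis of $H^{n-k}(M)$ and closed $k$-forms $\psi^{*(k)}_1,\dots,\psi^{*(k)}_{m_k}$ representing the dual basis of $H^k(M)$. By Proposition~\ref{p:trace},
\[
L(\Gamma)=\sum_{k=0}^n(-1)^k\sum_{i=1}^{m_k}\int_\Gamma\pi_1^*\psi^{(k)}_i\wedge\pi_2^*\psi^{*(k)}_i=\sum_{k=0}^n(-1)^k\sum_{i=1}^{m_k}\int_{M\times M}\pr_1^*\psi^{(k)}_i\wedge\pr_2^*\psi^{*(k)}_i\wedge\eta_\Gamma,
\]
using $\pi_l^*=j^*\pr_l^*$ and the defining property of $\eta_\Gamma$. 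Hence, with the closed $n$-form
\[
\Theta:=\sum_{k=0}^n(-1)^k\sum_{i=1}^{m_k}\pr_1^*\psi^{(k)}_i\wedge\pr_2^*\psi^{*(k)}_i\in\cala^n(M\times M),
\]
we get $L(\Gamma)=\int_{M\times M}\Theta\wedge\eta_\Gamma$, an integral depending only on the class $[\Theta]\in H^n(M\times M)$.

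The key step, which I also expect to be the main obstacle, is to identify $[\Theta]$ with the Poincar\'e dual class of the diagonal. As $k$ ranges over $0,\dots,n$ the classes $[\psi^{(k)}_i]$ collectively form a homogeneous basis of $H^*(M)$, and the $[\psi^{*(k)}_i]$ form the basis Poincar\'e-dual to it. The classical formula for the class of the diagonal under $H^n(M\times M)=\bigoplus_{p+q=n}H^p(M)\otimes H^q(M)$ then identifies $[\Theta]$ with $[\eta_\Delta]$ up to an overall sign $(-1)^n$ coming from the choice of ordering conventions; crucially, the alternating signs $(-1)^k$ built into $\Theta$ are exactly the degree-dependent signs occurring in that formula. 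Pinning down these signs, and matching them to the orientation convention used to define $\iota_\Gamma(p)$, is the delicate part; I would fix all conventions once and check them against the identity correspondence on $M=S^n$. Granting this, $L(\Gamma)=(-1)^n\int_{M\times M}\eta_\Delta\wedge\eta_\Gamma=\int_{M\times M}\eta_\Gamma\wedge\eta_\Delta$, the reordering absorbing the $(-1)^{n^2}=(-1)^n$.

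Finally I would invoke transversality. Since $\Gamma$ meets $\Delta$ transversally and $\dim\Gamma+\dim\Delta=2n=\dim(M\times M)$, the intersection $\Gamma\cap\Delta$ is a finite set of points, namely the nondegenerate fixed points of $\Gamma$. The standard fact that the integral over a compact oriented ambient manifold of the wedge of the Poincar\'e duals of two transversal complementary-dimensional submanifolds equals their signed intersection number gives $\int_{M\times M}\eta_\Gamma\wedge\eta_\Delta=\sum_p\varepsilon_p$, where $\varepsilon_p=+1$ exactly when the orientation of $T_{(p,p)}(M\times M)$ agrees with that of $T_{(p,p)}\Gamma\oplus T_{(p,p)}\Delta$, i.e.\ $\varepsilon_p=\iota_\Gamma(p)$. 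Combining this with the previous paragraph yields $L(\Gamma)=\sum_{\text{fixed points }p}\iota_\Gamma(p)$, as claimed.
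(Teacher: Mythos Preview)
Your proposal is correct and follows essentially the same route as the paper: invoke Proposition~\ref{p:trace}, recognize the resulting integrand as (a representative of) the Poincar\'e dual $\eta_\Delta$ of the diagonal via the K\"unneth formula, and then identify $\int_{M\times M}\eta_\Gamma\wedge\eta_\Delta$ with the transversal intersection number $\#(\Gamma\cdot\Delta)$. The only cosmetic difference is that the paper recognizes $\eta_\Delta$ first and then passes to the integral over $M\times M$ via $\eta_\Gamma$, whereas you pass to $M\times M$ first and then identify $[\Theta]=[\eta_\Delta]$; your more explicit worry about the $(-1)^n$ sign is handled in the paper simply by citing the Griffiths--Harris formula for $\eta_\Delta$ with the signs already built in.
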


   Our proof largely emulates the approach of Griffiths and Harris in
    their account of the Lefschetz fixed point formula for a smooth
   self-map \cite[Chap.~3, Sec.~4, pp.~419--422]{griffiths--harris}, but generalized to a smooth correspondence.
    The main idea is quite simple.  By Poincar\'e duality,
   the intersection number $\#(\Gamma,\Delta)$ of the correspondence $\Gamma$ with the
   diagonal $\Delta$ can be calculated as the integral of the wedge
   product of the differential forms representing their Poincar\'e
   duals.
   On the other hand, with the trace formula of Proposition~\ref{p:trace}, the Lefschetz number of the correspondence
   $\Gamma$ can also be calculated in terms of differential forms.
   The two expressions in differential forms turn out to be equal.

   \begin{proof}
   Let
$\psi_1, \ldots, \psi_s$ be closed
       forms on $M$ representing a basis for
       $H^*(M)$, and $\psi_1^*, \ldots, \psi_s^*$ closed
       forms representing the dual basis for $H^*(M)$.
       Note that the forms $\psi_i, \psi_j^*$ run over all degrees,
       but $\psi_i$ and $\psi_i^*$ have complementary degrees in $n$.
       By the K\"unneth formula, $\pi_1^* \psi_i \wedge \pi_2^*
       \psi_j$ represent
       a basis for the cohomology $H^*(M\times M)$.
        It is proven in \cite[p.~420]{griffiths--harris} that the Poincar\'e dual of the
       diagonal $\Delta$ is given by
       \[
         \eta_{\Delta} = \sum_i (-1)^{\deg \psi_i^*} \pi_1^* \psi_i
         \wedge \pi_2^* \psi_i^*.
       \]
       Then
     \begin{alignat*}{2}
       L(\Gamma) &= \sum_k (-1)^k \tr \pi_{1*}\pi_2^*|_{H^k(M)}\\
       &= \sum_k (-1)^k \sum_{\deg \psi_i = n-k} \int_{\Gamma}
         \pi_1^*\psi_i \wedge \pi_2^* \psi_i^* &\quad&(\text{Prop.}~\ref{p:trace})\\
         &= \int_{\Gamma} \sum_i (-1)^{\deg \psi_i^*}
         \pi_1^*\psi_i \wedge \pi_2^* \psi_i^* &\quad&(\text{$\psi_i$
           runs over all degrees})\\
         &=\int_{\Gamma} \eta_{\Delta} &\quad& (\text{by the formula
           for $\eta_{\Delta}$})\\
         &= \int_M \eta_{\Gamma} \wedge
         \eta_{\Delta}&\quad&(\text{def.\ of the Poincar\'e dual $\eta_{\Gamma}$})\\
         &=\#(\Gamma \cdot \Delta) = \sum_{\text{fixed points $p$}}
         \iota_{\Gamma}(p). \tag*{\qed}
       \end{alignat*}
     \end{proof}

     \section{A Conjecture for a Holomorphic Correspondence}

     Let $\Gamma$ be a \term{holomorphic correspondence} on a complex
     manifold $M$ of complex dimension $n$, that is, a complex
     submanifold  of $M \times M$ such that the two projections
     $\pi_i\colon \Gamma \to M$ are holomorphic covering maps.
     As for a smooth correspondence, a fixed point of the holomorphic correspondence $\Gamma$ is a
     point $p \in M$ such that $(p,p)$ is in the intersection $\Gamma
     \cap \Delta$
     in $M\times M$, where
     $\Delta$ is the diagonal in $M\times M$.
      The correspondence $\Gamma$ is said to be \term{transversal}
      if $\Gamma$ intersects the diagonal $\Delta$ transversally in $M
      \times M$.
     
     Denote by $\calo$ the sheaf of holomorphic functions
     and $\cala^{p,q}$ the sheaf of $\cinf$ $(p,q)$-forms on $M$.
     Let $\Gamma(M, \cala^{p,q})$ be the space of global sections of
     $\cala^{p,q}$; these are simpley the $\cinf$ $(p,q)$-forms on
     $M$.
     The sheaf $\calo$ has an acyclic resolution
     \[
       0 \to \calo \to \cala^{0,0} \overset{\dbar}{\longrightarrow}
       \cala^{0,1} \overset{\dbar}{\longrightarrow}
       \cala^{0,2} \overset{\dbar}{\longrightarrow} \cdots
     \]
     and the cohomology $H^k(M,\calo)$ is the cohomology of the
     differential complex of global sections
     \[
       \Gamma(M, \cala^{0,0}) \overset{\dbar}{\longrightarrow}
\Gamma(M, \cala^{0,1}) \overset{\dbar}{\longrightarrow}
\Gamma(M, \cala^{0,2}) \overset{\dbar}{\longrightarrow} \cdots .
\]
(For background on sheaf cohomology, see \cite{tu22}.)

For a holomorphic covering map $f\colon N \to M$, both the pullback
$f^*$ and the pushforward $f_*$ of $\cinf$ $(0,k)$-forms are cochain maps
of the complexes $\Gamma(N, \cala^{0,\bullet})$ and
$\Gamma(M,\cala^{0,\bullet})$.
Since the projection maps $\pi\colon \Gamma \to M$ are holomorphic
covering maps, both the pullback $\pi_2^*\colon H^*(M,\calo) \to
H^*(\Gamma, \calo)$ and the pushfoward $\pi_1^* H^*(\Gamma, \calo) \to
H^*(M,\calo)$ in cohomology are well-defined.
Thus, the holomorphic correspondence $\Gamma$ induces linear maps of
cohomology groups 
     \[
       \pi_{1*}\pi_2^*\colon H^k(M, \calo) \to H^k(M, \calo), \quad k=
       0, \ldots, n.
     \]
     The \term{holomorphic Lefschetz number} $L(\Gamma,\calo)$ of
     $\Gamma$ is defined to be an alternating sum of traces as
     before:
     \[
       L(\Gamma,\calo) = \sum_{k=0}^n (-1)^k \tr \pi_{1*}\pi_2^*\colon H^k(M,
       \calo) \to H^k(M, \calo).
     \]

     The holomorphic Lefschetz number is a global invariant.  Next we
     define the local contribution at each fixed point.  Since a
     correspondence is a holomorphic covering map of $M$ via $\pi_1$, locally it
     is the graph of a holomorphic function $f$.
     At a fixed point $p$, let $J(\Gamma)$ be the Jacobian matrix of
     the holomorphic function $f$ with respect to any holomorphic
     coordinate system.

     \begin{conjecture} \label{conj:1}
       {\it If $\Gamma$ is a transversal holomorphic correspondence
     on a compact complex manifold $M$, then the holomorphic Lefschetz number
     of $\Gamma$ is given by
     \[
       L(\Gamma,\calo) = \sum_{\emph{fixed points $p$}}\displaystyle
       \frac{1}{ 1 - \det J(\Gamma)_p}.
     \]
     }
   \end{conjecture}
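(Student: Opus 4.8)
The plan is to transplant the proof of Theorem~\ref{t:lefschetz} from the de Rham complex to the Dolbeault complex $\Gamma(M,\cala^{0,\bullet})$, replacing Poincar\'e duality by Serre duality and the Poincar\'e dual $\eta_\Delta$ of the diagonal by a $\dbar$-closed $(n,n)$-form on $M\times M$ representing the Dolbeault class of $\Delta$. This is exactly the route by which Atiyah--Bott \cite{atiyah--bott66} and Griffiths--Harris \cite{griffiths--harris} pass from the smooth to the holomorphic Lefschetz fixed point formula for a self-map; a holomorphic correspondence differs from a self-map only in that the first projection $\pi_1$ is a finite holomorphic covering rather than a biholomorphism, so the bookkeeping of Proposition~\ref{p:trace} should carry over with only cosmetic changes. (One could alternatively try to deduce the formula from the Atiyah--Bott fixed point theorem for elliptic complexes \cite{atiyah--bott67}, suitably extended from geometric endomorphisms to correspondences, since $\pi_{1*}\pi_2^*$ is exactly such an endomorphism of the Dolbeault complex; but a differential-forms argument is more in the spirit of the smooth case above.)

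First I would establish the Dolbeault analogue of Proposition~\ref{p:trace}. For each $q$, choose $\dbar$-closed $(0,q)$-forms $\phi_1,\dots,\phi_m$ representing a basis of $H^q(M,\calo)$ and $\dbar$-closed $(n,n-q)$-forms $\phi_1^*,\dots,\phi_m^*$ representing the Serre-dual basis of $H^{n-q}(M,\Omega^n)\cong H^q(M,\calo)^*$, normalized by $\int_M\phi_i^*\wedge\phi_j=\delta_{ij}$. Since $\pi_1$ is a degree-$r$ holomorphic covering, the two push--pull identities used in the proof of Proposition~\ref{p:trace} remain valid for $(0,\bullet)$-forms, and repeating that computation gives, on $H^q(M,\calo)$,
\[
  \tr\bigl(\pi_{1*}\pi_2^*\bigr)=\sum_{i=1}^m\int_\Gamma\pi_1^*\phi_i^*\wedge\pi_2^*\phi_i .
\]
Summing over $q$ with the sign $(-1)^q$ yields $L(\Gamma,\calo)=\int_\Gamma\mu$, where
\[
  \mu=\sum_{q=0}^n(-1)^q\sum_i\pi_1^*\phi_i^*\wedge\pi_2^*\phi_i
\]
is a $\dbar$-closed $(n,n)$-form on $M\times M$, the Dolbeault counterpart of $\eta_\Delta$.

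Next I would check that the class $[\mu]\in H^n(M\times M,\Omega^n_{M\times M})$ is dual to the diagonal cycle $[\Delta]$; this is a K\"unneth computation using $\Omega^n_{M\times M}\cong\pi_1^*\Omega^n_M\otimes\pi_2^*\Omega^n_M$, parallel to the de Rham identity verified in \cite[p.~420]{griffiths--harris}. Because $\Gamma$ is a compact complex manifold without boundary, $\int_\Gamma$ annihilates $\dbar$-exact $(n,n)$-forms: if $\beta$ has type $(n,n-1)$ on $M\times M$ then its restriction $i^*\beta$ to $\Gamma$ satisfies $\partial(i^*\beta)=0$ for degree reasons, so $\dbar(i^*\beta)=d(i^*\beta)$ and $\int_\Gamma i^*\dbar\beta=\int_\Gamma d(i^*\beta)=0$ by Stokes. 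Hence $\int_\Gamma\mu$ depends only on the class of $\mu$, and $\mu$ may be replaced by a cohomologous representative supported in an arbitrarily small neighborhood of $\Delta$. Transversality forces $\Gamma\cap\Delta$ to be the finite set of fixed points $(p,p)$, so $\int_\Gamma\mu$ splits as a sum of local contributions, one for each fixed point.

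The hard part, just as in Atiyah--Bott, will be the local computation at a fixed point $p$, and this is where I expect the genuine work to lie. In holomorphic coordinates $z$ on $M$ near $p$ — so $M\times M$ has coordinates $(z,w)$ and $\Delta=\{w=z\}$ — a $\dbar$-closed representative of the diagonal class supported near $\Delta$ is a universal constant times the Bochner--Martinelli kernel in $w-z$. The branch of $\Gamma$ through $(p,p)$ is the graph $w=f(z)$ of a holomorphic map $f$ with $f(p)=p$ and $df_p=J(\Gamma)_p$; restricting the representative to this graph turns the local contribution into a Bochner--Martinelli-type integral in $f(z)-z$, and since transversality gives $\det(I-df_p)\neq0$, the substitution $u=f(z)-z$ is a local biholomorphism and evaluates the integral to $1/\det_{\C}\bigl(I-J(\Gamma)_p\bigr)$ — the same residue calculation that yields the Atiyah--Bott local index for a self-map. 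Summing over fixed points gives
\[
  L(\Gamma,\calo)=\sum_{\text{fixed points }p}\frac{1}{\det_{\C}\bigl(I-J(\Gamma)_p\bigr)} ,
\]
which agrees with the displayed formula when $\dim_{\C}M=1$. In higher dimensions this emulation of the smooth proof produces $\det(I-J)$ rather than $1-\det J$ in the denominator, so the displayed closed form should there be read as the curve case — the case bearing on Hecke correspondences on modular curves — unless a further idea is found to reconcile the two expressions.
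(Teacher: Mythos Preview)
There is no proof in the paper to compare against: the statement is explicitly labelled a \emph{Conjecture}, and immediately after it the author writes that he has ``no evidence for this conjecture other than that it specializes to the correct formula when the correspondence $\Gamma$ is the graph of a holomorphic map.'' So your proposal is not a reconstruction of an existing argument but an attempt to settle an open problem.

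That said, your program is a natural one and, more interestingly, it surfaces a real issue with the conjecture as stated. Your local computation via the Bochner--Martinelli kernel would give the contribution $1/\det_{\C}\bigl(I-J(\Gamma)_p\bigr)$ at each fixed point, which is exactly what the Atiyah--Bott formula (Theorem~\ref{t:bundle} with $E=\calo$, $\varphi=\mathrm{id}$) predicts for a self-map. The conjecture, however, displays $1/\bigl(1-\det J(\Gamma)_p\bigr)$. These agree only when $\dim_{\C}M=1$, so you are right to flag the discrepancy; the displayed denominator is almost certainly either a slip or meant only for curves. As a proof, though, your sketch still has substantial gaps you acknowledge yourself: the K\"unneth identification of $[\mu]$ with the Dolbeault class of $\Delta$, the existence of a $\dbar$-cohomologous representative with support near $\Delta$ (this is not as automatic as in de Rham theory, since $\dbar$-Poincar\'e lemmas are more delicate), and the honest execution of the Bochner--Martinelli residue calculation all need to be carried out, not just invoked.
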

     I do not have any evidence for this conjecture other than that it
     specializes to the correct formula when the correspondence
     $\Gamma$ is the graph of a holomorphic map $f\colon M \to M$.
     Of course, the simplicity of the statement plays in its favor.
    
\section{Extension to Holomorphic Vector Bundles}

In their seminal paper on the fixed point theorem for elliptic
complexes \cite{atiyah--bott68}, Atiyah and Bott extended, as a
corollary of their general theorem, the
Lefschetz fixed point theorem to a holomorphic vector bundle for a
self-map of a compact complex manifold.

To get an idea of what needs to be generalized for a holomorphic correspondence,
we give here a brief summary of the Atiyah--Bott result for a holomorphic vector bundle.  For more details,
consult \cite[Section 4, pp.~455--459]{atiyah--bott68}.
Let $E$ be a holomorphic vector bundle over a compact complex manifold
$M$ and $f\colon M \to M$ a holomorphic map.
Denote by $\Gamma(E)$ the vector space of $\cinf$ sections of $E$ over
$M$
and by $\Lambda^{p,q}$ the $\cinf$ vector bundle of $(p,q)$-covectors
on $M$.
The smooth sections of $E \otimes \Lambda^{p,q}$ are the $E$-valued
$(p,q)$-forms on $M$.
The $\bar{\partial}$-operator on $(p,q)$-forms extends to $E$-valued
$(p,q)$-forms by acting as the identity on $E$ and as $\bar{\partial}$
on the forms.
There is then a differential complex
\[
  \Gamma(E) \overset{\dbar}{\longrightarrow} \Gamma(E \otimes \Lambda^{0,1})
\overset{\dbar}{\longrightarrow} \Gamma(E \otimes \Lambda^{0,2})
\overset{\dbar}{\longrightarrow} \cdots.
\]
The cohomology $H^*\big(M, \calo(E)\big)$ of $M$ with coefficients in
$E$ is defined to be the cohomology of this complex of $E$-valued
$(p,q)$-forms.

Now let $F$ be a holomorphic vector bundle over the complex manifold
$M$ and let $f^*F$ be its pullback under the holomorphic map $f\colon
M \to M$.
The map $f\colon M \to M$ induces a linear map of $\cinf$ sections
$f^*\colon \Gamma(F) \to \Gamma(f^*F)$ by sending a section $s\in
\Gamma(F)$ to
\[
  (f^*s)(x) = (s \comp f)(x) = s\big( f(x) \big) \in F_{f(x)} =
  (f^*F)_x, \quad x \in M
\]
where $F_{f(x)}$ is the fiber of $F$ at $f(x)$.
In order to obtain an endomorphism of $\Gamma(F)$, Atiyah and Bott
introduced the notion of a \term{lifting} of the map $f$ to the bundle
$F$.
It is a holomorphic bundle map $\varphi\colon f^*F \to F$ over $M$.
A lifting $\varphi$ induces a linear map $\varphi_*\colon \Gamma(f^*F) 
\to \Gamma(F)$ by composition: $\varphi_*(s) = \varphi \comp s$.
The holomorphic map $f\colon M \to M$ and a lifting $\varphi\colon
f^*F \to F$ together define an endomorphism of $\Gamma(F)$:
\[
  \Gamma(F) \overset{f^*}{\longrightarrow} \Gamma(f^*F) \overset{\varphi_*}{\longrightarrow}
  \Gamma(F).
\]
Applied to $F = E \otimes \Lambda^{0,k}$, this will then induce an
endomorphism
\[
  (f,\varphi)^*\colon H^*\big(M, \calo(E)\big) \to H^*\big(M,
  \calo(E)\big)
\]
and the Lefschetz number of the triple $(f, \varphi, E)$ is defined to be
\begin{equation} \label{7e:number}
  L(f, \varphi, E) \colon= \sum_{k=1}^n (-1)^k \tr \left. (f,\varphi)^*
  \right|_{H^k(M, \calo(E))}, \quad n=\dim_{\C} M.
\end{equation}

\begin{thm}\cite[Theorem 4.12, p.~458]{atiyah--bott68} \label{t:bundle}
  Let $E$ be a holomorphic vector bundle over a compact complex
  manifold $M$, $f\colon M \to M$ a transversal holomorphic
  self-map, and $\varphi\colon f^*E \to E$ a holomorphic bundle map.
  Then
  \[
    L(f, \varphi, E) = \sum_{f(p)=p} \frac{\tr \varphi_p}{\det (1-
      f_{*,p})}.
  \]
\end{thm}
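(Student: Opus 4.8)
The plan is to adapt the proof of Theorem~\ref{t:lefschetz} to the Dolbeault complex with coefficients in $E$, with Serre duality on $M$ in place of Poincar\'e duality. First I would recast the statement geometrically. Under the Dolbeault isomorphism $H^k(M,\calo(E))\cong H^{0,k}_{\dbar}(M,E)$, the endomorphism $(f,\varphi)^* = \varphi_*\comp f^*$ is the cochain endomorphism of $\Gamma(E\otimes\Lambda^{0,\bullet})$ determined by the holomorphic map $f$ and the lifting $\varphi$, and $L(f,\varphi,E)$ is its graded trace in cohomology. Let $\Gamma_f\subset M\times M$ be the graph of $f$, with projections $\pi_1,\pi_2$; since $\pi_1$ is a biholomorphism onto $M$ we have $f^* = \pi_{1*}\pi_2^*$, exactly as for a correspondence. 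The goal is to identify $L(f,\varphi,E)$ with a holomorphic intersection number of $\Gamma_f$ and the diagonal $\Delta$, twisted by $\varphi$, and then to localize that number at the fixed points with the stated weights.

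The first step is a holomorphic analogue of Proposition~\ref{p:trace}. Fix $\dbar$-closed $E$-valued $(0,k)$-forms $\psi_1,\dots,\psi_m$ representing a basis of $H^{0,k}_{\dbar}(M,E)$, and, via the Serre pairing $H^{0,k}_{\dbar}(M,E)\otimes H^{n,n-k}_{\dbar}(M,E^*)\to\C$, $(\alpha,\beta)\mapsto\int_M\alpha\wedge\beta$, choose $\dbar$-closed $E^*$-valued $(n,n-k)$-forms $\psi_1^*,\dots,\psi_m^*$ representing the dual basis. Repeating the pushforward/Fubini computation of Proposition~\ref{p:trace}, with the lifting $\varphi$ used to turn the $\pi_2^*E$-coefficients into $\pi_1^*E$-coefficients so that the wedge pairs against $\pi_1^*E^*$, gives
\[
\tr\,(f,\varphi)^*\big|_{H^{0,k}(M,E)} \;=\; \sum_{i=1}^{m}\int_{\Gamma_f}\pi_1^*\psi_i^*\wedge\pi_2^*\psi_i .
\]
Forming the alternating sum over $k$ and using the K\"unneth formula for Dolbeault cohomology on $M\times M$, the classes $\pi_1^*\psi_i^*\wedge\pi_2^*\psi_i$ assemble (up to sign) into a $\dbar$-closed $(n,n)$-form representing the Dolbeault Poincar\'e dual $\eta_\Delta$ of the diagonal twisted by $\varphi$ along $\Delta$ --- the Dolbeault counterpart of the formula for $\eta_\Delta$ borrowed from Griffiths--Harris in the proof of Theorem~\ref{t:lefschetz}. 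Taking the representative of $\eta_\Delta$ supported in an arbitrarily thin tube about $\Delta$ and using transversality of $\Gamma_f$ and $\Delta$, we get
\[
L(f,\varphi,E) \;=\; \int_{\Gamma_f}\eta_\Delta \;=\; \sum_{f(p)=p}\nu_p ,
\]
where $\nu_p$ depends only on the behaviour of $f$ and $\varphi$ near the fixed point $p$.

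The third step, computing $\nu_p$, is where essentially all the work lies. Choose holomorphic coordinates $z$ centered at $p$, so $f(z) = Az + O(|z|^2)$ with $A = f_{*,p}$; transversality is exactly the condition $\det(1-A)\neq 0$. The local integral factors into three contributions: (i) an \emph{antiholomorphic combinatorial factor} from the action of $f^*$ on $(0,k)$-covectors at $p$, namely $\sum_k(-1)^k\tr\,\Lambda^{k}(\bar A) = \det(1-\bar A) = \overline{\det(1-f_{*,p})}$; (ii) a \emph{normalization factor} $\lvert\det(1-A)\rvert^{-2}$ produced by the linear change of variables $z\mapsto(A-1)z$ that straightens $\Gamma_f\cap\Delta$ inside the chart; and (iii) the \emph{fiber factor} $\tr\varphi_p$, since along $\Delta$ the lifting restricts to the endomorphism $\varphi_p\colon E_p\to E_p$. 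Multiplying,
\[
\nu_p \;=\; \overline{\det(1-f_{*,p})}\cdot\frac{1}{\lvert\det(1-f_{*,p})\rvert^{2}}\cdot\tr\varphi_p \;=\; \frac{\tr\varphi_p}{\det(1-f_{*,p})} ,
\]
and summing over the fixed points proves the theorem. The delicate point is factor (ii): in the smooth case the analogous intersection contributes only the sign $\sgn\det_{\R}(1-A) = \sgn\lvert\det_{\C}(1-A)\rvert^2$, whereas here one must pin down the exact holomorphic normalization, which means tracking the $\dbar$-primitives that realize $\eta_\Delta$ simultaneously in the holomorphic and antiholomorphic directions and verifying that (i) and (ii) combine to $1/\det(1-f_{*,p})$ rather than to its absolute value.

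As an alternative I would run a McKean--Singer argument instead: fix Hermitian metrics on $M$ and $E$, let $H_t$ ($t>0$) be the heat operator of the associated $\dbar$-Laplacian on $E\otimes\Lambda^{0,\bullet}$, and note that the alternating sum $\sum_k(-1)^k\tr\big((f,\varphi)^*\comp H_t\big|_{\Gamma(E\otimes\Lambda^{0,k})}\big)$ is independent of $t>0$ and equals $L(f,\varphi,E)$, because $(f,\varphi)^*$ is chain homotopic to its harmonic part. Letting $t\to 0^+$ and using the off-diagonal exponential decay of the heat kernel confines the trace to small neighbourhoods of the fixed points, and one is reduced to the same local linear algebra, with the Gaussian integral of the heat kernel now supplying the normalization factor (ii). Either way, the local asymptotic analysis at a nondegenerate fixed point is the crux.
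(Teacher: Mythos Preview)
The paper does not prove this theorem: it is quoted verbatim as \cite[Theorem 4.12, p.~458]{atiyah--bott68} and used only as background for Conjecture~7.2, with no argument supplied. So there is no proof in the paper to compare your proposal against.

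For what it is worth, your sketch is a sound outline of one standard route to the result---the Griffiths--Harris style argument you saw invoked for Theorem~\ref{t:lefschetz}, transplanted to the Dolbeault complex with Serre duality replacing Poincar\'e duality and the lifting $\varphi$ used to contract the $\pi_1^*E^*$ and $\pi_2^*E$ coefficients on $\Gamma_f$. Your heat-kernel alternative is closer in spirit to what Atiyah and Bott actually do in \cite{atiyah--bott67,atiyah--bott68}: they work with parametrices for the elliptic complex and extract the local contribution from the symbol, rather than from an explicit Dolbeault representative of the diagonal class. Either way the crux is exactly where you flag it: your factor~(ii) does not literally come from ``the linear change of variables $z\mapsto(A-1)z$''---one has to write down an explicit $\dbar$-closed $(n,n)$-representative of the relevant kernel near $(p,p)$ (e.g.\ via the Bochner--Martinelli kernel in the normal directions to $\Delta$) and integrate it over $\Gamma_f$, at which point the combination $\det(1-\bar A)\cdot\lvert\det(1-A)\rvert^{-2}=1/\det(1-A)$ emerges in one step rather than as a product of separately justified factors.
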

In this theorem, a \term{transversal} map is one whose graph intersects the diagonal transversally
in $M \times M$,
$\varphi_p\colon E_{f(p)}= E_p \to E_p$ is a complex
linear map, and $f_{*,p}$ is the differential of $f$ on the holomorphic
tangent space of $M$ at $p$.

For a holomorphic correspondence, a plausible conjecture should have
the same form as Theorem~\ref{t:bundle}.

\begin{conjecture}
  {\it Let $E$ be a holomorphic vector bundle over a compact complex
    manifold $M$, $\Gamma \subset M \times M$ a transversal
    correspondence, and $\varphi\colon f^*E \to E$ a holomorphic
    bundle map.  Then $(\Gamma, \varphi)$ induces an endomorphism of
    $H^k\big(M, \calo(E)\big)$ for each $k$ such that the Lefschetz
    number 
    $L(\Gamma, \varphi, E)$ defined as in \eqref{7e:number}
    satisfies
      \[
    L(\Gamma, \varphi, E) = \sum_{f(p)=p} \frac{\tr \varphi_p}{\det \big(1-
      J(\Gamma)_p\big)},
  \]
   where $J(\Gamma)_p$ is the Jacobian matrix of $\Gamma$ at $p$.
}
\end{conjecture}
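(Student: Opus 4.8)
\emph{A possible proof.}
The plan is to reduce the conjecture to the Atiyah--Bott fixed point theorem for elliptic complexes \cite{atiyah--bott67}, applied to the Dolbeault complex of $E$ and to the geometric endomorphism attached to the correspondence $(\Gamma,\varphi)$ in place of a self-map. First I would make the induced endomorphism precise. Since $\pi_1\colon\Gamma\to M$ is only a holomorphic covering, $\Gamma$ is locally, not globally, the graph of a holomorphic map, and the honest meaning of a lifting is a holomorphic bundle map $\varphi\colon\pi_2^*E\to\pi_1^*E$ over $\Gamma$; on each sheet of $\pi_1$ it restricts to the germ of a lifting of the corresponding branch of $\pi_2\pi_1^{-1}$. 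Tensoring with $\Lambda^{0,k}$ and letting $\varphi$ act as the identity on forms, one obtains a cochain endomorphism
\[
  T \;=\; \pi_{1*}\comp\varphi\comp\pi_2^*\colon\ \Gamma(E\otimes\Lambda^{0,\bullet})\ \to\ \Gamma(E\otimes\Lambda^{0,\bullet}),
\]
which commutes with $\dbar$ because $\pi_2$ is holomorphic, $\varphi$ is a holomorphic bundle map, and the pullback and pushforward of $(0,k)$-forms along the holomorphic covering $\pi_1$ are cochain maps, as recalled above. Hence $T$ descends to an endomorphism of $H^k(M,\calo(E))$ for each $k$, and $L(\Gamma,\varphi,E)$ is well defined by \eqref{7e:number}.

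Next I would run the heat-equation argument of Atiyah--Bott. After fixing Hermitian metrics on $M$ and on $E$, $T$ is a bounded operator on the $L^2$ Dolbeault complex, and since it commutes with $\dbar$, the Lefschetz analogue of McKean--Singer's formula gives, for every $t>0$,
\[
  L(\Gamma,\varphi,E)\;=\;\sum_k(-1)^k\,\tr\!\left(\left.T\,e^{-t\Delta_{\dbar}}\right|_{\Gamma(E\otimes\Lambda^{0,k})}\right),
\]
where $\Delta_{\dbar}$ is the Dolbeault Laplacian. The operator $T\,e^{-t\Delta_{\dbar}}$ is smoothing, and its Schwartz kernel restricted to the diagonal is a finite sum of contributions, one for each branch $g$ of $\pi_2\pi_1^{-1}$, of the form $\tr\big(\varphi_g(x)\,K_t(g(x),x)\big)$ with $K_t$ the heat kernel of $\Delta_{\dbar}$. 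As $t\to0^+$ the $g$-th summand concentrates where $g(x)=x$, i.e.\ at the fixed points of $\Gamma$; by transversality $\Gamma\cap\Delta$ is a finite set of nondegenerate points. Through such a point $(p,p)$ exactly one sheet of $\pi_1$ passes, so near $p$ the correspondence is the graph of a germ of a holomorphic self-map $f$ with $f(p)=p$, $J(\Gamma)_p=f_{*,p}$ and $1-f_{*,p}$ invertible, while $\varphi$ restricts to a germ of a lifting with $\varphi_p\colon E_p=E_{f(p)}\to E_p$. On that neighborhood $T$ agrees with the transfer operator of the pair $(f,\varphi)$, so its contribution to the $t\to0$ limit is the local term of the holomorphic Lefschetz fixed point theorem (Theorem~\ref{t:bundle}, \cite{atiyah--bott68}), namely $\tr\varphi_p/\det(1-f_{*,p})$. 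Summing over the fixed points gives the asserted formula.

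The hard part will be that, in contrast to the smooth case, there is no shortcut. The proof of Theorem~\ref{t:lefschetz} rests on Poincar\'e duality and the explicit closed form $\eta_\Delta$ representing the diagonal; but $H^*(M,\calo(E))$ carries only Serre duality, pairing $H^k(M,\calo(E))$ with $H^{n-k}(M,\calo(E^{\vee}\otimes K_M))$, and the local factor $\tr\varphi_p/\det(1-f_{*,p})$ is a residue, not a topological intersection number, so no $\dbar$-closed form on $M\times M$ will restrict to $\Gamma$ and integrate to it --- which is precisely why Atiyah and Bott needed heat-kernel analysis. Thus the real content of a proof is analytic: one must verify that the transfer operator $T$ built from a correspondence, rather than from a single map, is still smoothing enough for the heat-equation identity and for the $t\to0$ localization to go through, and that the local model of $\Gamma$ along $\Gamma\cap\Delta$ genuinely reduces to a single graph, so that the Atiyah--Bott local index computation applies verbatim. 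The framework for Lefschetz numbers of correspondences on elliptic complexes and on constructible sheaves developed in \cite{dell'ambrogio} and \cite{goresky--macpherson93} should supply the first ingredient. An alternative, purely algebraic route would be to adapt the Grothendieck--Illusie proof of the holomorphic Lefschetz formula, via the trace map on the dualizing complex \cite{grothendieck--illusie}, to a correspondence --- replacing the Gysin maps of a morphism by those of the two projections $\pi_1,\pi_2$; the obstacle there is reconstructing the residue--duality formalism in the two-sided setting.
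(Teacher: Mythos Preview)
The paper does not prove this statement. It is explicitly presented as a conjecture---offered as the plausible bundle-valued analogue of Conjecture~\ref{conj:1} and of Theorem~\ref{t:bundle}---and left open; the surrounding discussion and Shimura's appended emails frame it as part of an unresolved problem. There is therefore no proof in the paper against which to compare your proposal.

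Judged on its own, your text is a credible outline rather than a proof. You correctly repair the statement by reinterpreting the lifting as a holomorphic bundle map $\varphi\colon\pi_2^*E\to\pi_1^*E$ over $\Gamma$ (since no global $f$ exists), you correctly define the transfer $T=\pi_{1*}\circ\varphi\circ\pi_2^*$ on the Dolbeault complex and note that it commutes with~$\dbar$, and your localization heuristic---that near each point of $\Gamma\cap\Delta$ the correspondence is the graph of a single holomorphic germ, so the Atiyah--Bott local index applies verbatim---is the right reduction. But the analytic core is not carried out: you assert without verification that $Te^{-t\Delta_{\dbar}}$ is trace class with $t$-independent supertrace, and that the $t\to0$ asymptotics localize when the distributional kernel of $T$ is supported on a correspondence rather than on the graph of a map; you then defer both points to \cite{dell'ambrogio} and \cite{goresky--macpherson93} without checking that either framework actually covers the Dolbeault complex with holomorphic bundle coefficients. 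As you yourself signal (``a possible proof,'' ``the hard part will be,'' ``should supply''), this is a strategy whose decisive steps remain to be executed---which is consistent with the paper's own stance that the statement is an open conjecture.
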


In Shimura's emails to Michael Atiyah and Loring Tu in June 2013 (see
Appendix), he actually claimed more.
He said he had conjectured at Woods Hole in 1965 a Lefschetz fixed
point formula for an algebraic correspondence between two holomorphic
vector bundles on two algebraic varieties of the same dimension.
The statement of this forgotten conjecture remains a mystery.

Stated more generally, Shimura's intention might have been the following
(as formulated by Mark Goresky in a recent private communication):

\begin{quote}
{\it Find and prove a holomorphic Lefschetz fixed point theorem that can be
used to calculate the trace of a Hecke correspondence on the
holomorphic cohomology, coherent cohomology, or $\dbar$-cohomology,
of a Hermitian locally symmetric space.}
\end{quote}

     \section*{Appendix}

     \noindent
     \textbf{Email from Goro Shimura to Loring Tu, June 13, 2013}

     \bigskip
     \noindent
     Dear Loring,

      It is nice to hear from you.  I remember that you sent me your book in
collaboration with Bott. Here is my belated thanks for the book!

      As for that fixed point formula I can say the following.

      In the case of Riemann surfaces, Eichler's result is quite general,
and so it was definitely meaningless to conjecture something only for Riemann
surfaces.

      What I conjectured was a formula for an algebraic correspondence,
not just for a map, between two algebraic varieties of the same
dimension, so that it generalizes Eichler's formula. (Naturally, we have
to (I had to) formulate it in terms of holomorphic bundles.) I thought
it might be applicable to automorphic forms on the higher-dimensional
spaces.

\begin{center}
  $\vdots$
\end{center}

      As I understand it, the Atiyah--Bott formula deals with only a map, not
a correspondence, and so it does not include Eichler's formula, nor
does it prove my conjecture. Therefore I think it is an open problem
to prove it for a correspondence. Am I wrong? 

\begin{center}
  $\vdots$
\end{center}

\noindent
 With best regards,\\
 Goro Shimura

 \bigskip
  \noindent
     \textbf{Email from Goro Shimura to Michael Atiyah, June 19, 2013}

     \bigskip
     \noindent
     Dear Michael,

     \begin{center}
  $\vdots$
\end{center}

           Frankly I am incapable of telling you what exactly my conjecture was. Probably I made notes, but I don't think I can find them.

I can tell you that it concerned an algebraic correspondence between two
holomorphic bundles on two base algebraic varieties of the same dimension,
consistent with an algebraic correspondence on the base varieties.
I formulated it so that it becomes Eichler's formula in the
 one-dimensional case, and  also it becomes
a special case of the Lefschetz fixed point formula when the bundles
 are trivial. I was not considering real analyticity. 

 \begin{center}
  $\vdots$
\end{center}
     
\noindent
With very best regards, \\
Goro

\end{document}